\documentclass[12pt]{amsart}
\usepackage{amsmath,amsthm,amsfonts,amssymb,mathrsfs}
\usepackage{pb-diagram}
\date{\today}

\usepackage{color}
\input xymatrix
\xyoption{all}
%\input xy
%\xyoption{all}

\usepackage{hyperref}

  \setlength{\textwidth}{18.5truecm}
   \setlength{\textheight}{25truecm}
   \setlength{\oddsidemargin}{-28.5pt}
   \setlength{\evensidemargin}{-28.5pt}
   \setlength{\topmargin}{-30pt}

% \righthyphenmin=2

% \theoremstyle{definition}
\newtheorem{theorem}{Theorem}%[section]

\newtheorem{corollary}[theorem]{Corollary}
\newtheorem{lemma}[theorem]{Lemma}
\theoremstyle{definition}
%[section]
%[section]
%[section]
%[section]

%[section]

\newcommand{\N}{\mathbb N}

\def\N{\mathbb N}
\def\t_c{\tau_{comp}}
\def\op{\operatorname}

\begin{document}

\title[An alternative look at the structure of graph inverse semigroups]{An alternative look at the structure of graph inverse semigroups}

\author[S.~Bardyla]{Serhii~Bardyla}
\thanks{The work of the author is supported by the Austrian Science Fund FWF (Grant  I
3709 N35).}
\address{S. Bardyla: Institute of Mathematics, Kurt G\"{o}del Research Center, Vienna, Austria}
\email{sbardyla@yahoo.com}

\keywords{Polycyclic monoid, graph inverse semigroup, Brandt $\lambda^0$-extension}

\subjclass[2010]{20M18}

\begin{abstract}
For any graph inverse semigroup $G(E)$ we describe subsemigroups $D^0=D\cup\{0\}$ and $J^0=J\cup\{0\}$ of $G(E)$ where $D$ and $J$ are arbitrary $\mathcal{D}$-class and $\mathcal{J}$-class of $G(E)$, respectively. In particular, we prove that for each $\mathcal{D}$-class $D$ of a graph inverse semigroup over an acyclic graph the semigroup $D^0$ is isomorphic to a semigroup of matrix units. Also we show that for any elements $a,b$ of a graph inverse semigroup $G(E)$,
$J_a\cdot J_b\cup J_b\cdot J_a\subset J_b^0$ if there exists a path $w$ such that $s(w)\in J_a$ and $r(w)\in J_b$.
\end{abstract}
\maketitle
\section{Preliminaries}
We shall follow the terminology of~\cite{Clifford-Preston-1961-1967} and~\cite{Lawson-1998}. By $\N$ we denote the set of positive integers.
%By $\omega$ we denote the first infinite cardinal.
The cardinality of a set $X$ is denoted by $|X|$.
%Let $E$ be a semilattice, $\{S_e\}_{e\in E}$ be a family of semigroups and $S$ be a semigroup with zero $0$ such that the following conditions hold:
%\begin{equation*}
%\begin{split}
%&(1)\quad S=\cup_{e\in E}S_e;\\
%&(2)\quad S_e\cap S_f\subseteq\{0\},\hbox{ if }e\neq f;\\
%&(3)\quad S_e\cdot S_f\subseteq S_{ef}.\\
%\end{split}
%\end{equation*}
%Then the semigroup $S$ is called a {\em 0-union of the semilattice $E$ of a family of semigroups $\{S_e\}_{e\in E}$}.
A semigroup $S$ is called an \emph{inverse semigroup} if for each element $a\in S$ there exists a unique
element $a^{-1}\in S$ such that
$aa^{-1}a=a$ and $a^{-1}aa^{-1}=a^{-1}$.
%The map which associates every element of an inverse semigroup to its
%inverse is called an \emph{inversion}.

By $\mathcal{R}$,
$\mathcal{L}$, $\mathcal{J}$, $\mathcal{D}$ and $\mathcal{H}$ we denote
Green's relations on a semigroup $S$ which are defined as follows: for each $a,b\in S$
\begin{center}
\begin{tabular}{rcl}
  %\hline
  % after \\: \hline or \cline{col1-col2} \cline{col3-col4} ...
  $a\mathcal{R}b$ & if and only if & $aS\cup\{a\}=bS\cup\{b\}$; \\
  $a\mathcal{L}b$ & if and only if & $Sa\cup\{a\}=Sb\cup\{b\}$; \\
  $a\mathcal{J}b$ & if and only if & $SaS\cup aS\cup Sa\cup \{a\}=SbS\cup bS\cup Sb\cup \{b\}$; \\
    & $\mathcal{D}=\mathcal{L}{\circ}\mathcal{R}=\mathcal{R}{\circ}\mathcal{L}$; & $\mathcal{H}=\mathcal{L}\cap\mathcal{R}$. \\
  %\hline
\end{tabular}
\end{center}
%The $\mathcal{R}$-class (resp., $\mathcal{L}$-, $\mathcal{H}$-, $\mathcal{D}$- or $\mathcal{J}$-class) of the semigroup $S$ which contains an element $a$ of $S$ will be denoted by $R_a$ (resp., $L_a$, $H_a$, $D_a$ or $J_a$).

Let $S$ be a semigroup with zero $0_S$ and $X$ be a non-empty set. By $\mathcal{B}_{X}(S)$ we denote the set
 $
 X{\times}S{\times}X\sqcup\{0\}
 $
endowed with the following semigroup
operation:
\begin{equation*}
\begin{split}
&(a,s,b)\cdot(c,t,d)=
\left\{
  \begin{array}{cl}
    (a,s\cdot t,d), & \hbox{ if~ } b=c;\\
    0, & \hbox{ if~ } b\neq c,
  \end{array}
\right.\\
&\hbox{and } (a,s,b)\cdot 0=0\cdot(a,s,b)=0\cdot 0=0, \hbox{ for each } a,b,c,d\in X \hbox{ and } s,t\in S.
\end{split}
\end{equation*}
 The semigroup $\mathcal{B}_{X}(S)$ is called the \emph{Brandt $X$-extension of the semigroup $S$}. Obviously, the set $J=\{(a,0_S,b)\mid a,b\in X\}\cup\{0\}$ is a two-sided ideal of the semigroup $\mathcal{B}_{X}(S)$. The Rees factor semigroup $\mathcal{B}_{X}(S)/J$ is called the {\em Brandt $X^0$-extension of the semigroup $S$} and is denoted by $\mathcal{B}_{X}^0(S)$. If $S$ is the semilattice $(\{0,1\},\min)$ then we denote the semigroup $\mathcal{B}_{X}^0(S)$ by $\mathcal{B}^0_X$. The semigroup $\mathcal{B}^0_X$ is well-known (see page 86 from~\cite{Lawson-1998}) and is called the {\em semigroup of $X{\times}X$-matrix units}. Observe that semigroups $\mathcal{B}^0_{X}(S)$ and $\mathcal{B}^0_{Y}(S)$ are isomorphic iff $|X|=|Y|$. A Brandt $X^0$-extension of a group play an important role in the structure of primitive inverse semigroups (see \cite[Chapter~3.3]{Lawson-1998}). Algebraic and topological properties of a Brandt $X^0$-extension of a semigroup were investigated in~\cite{Gutik-Pavlyk-2006} and~\cite{GutRep-2010}.

%A bicyclic monoid is the semigroup with the identity $1$ generated by two elements $p$ and $q$ subject to the condition $pq=1$.

%Topologization of the bicyclic semigroup was investigated in~\cite{Bertman-West-1976,Eberhart-Selden-1969} and~\cite{Gutik-2015}. Neither stable nor $\Gamma$-compact topological semigroups can
%contain a copy of the bicyclic semigroup (see \cite{Anderson-Hunter-Koch-1965,Hildebrant-Koch-1988}). In~\cite{GutRep-2007} it was proved that the bicyclic monoid does not embed into a countably compact topological inverse semigroup. A topological semigroup which has a pseudocompact square can not contain the bicyclic monoid~\cite{BanDimGut-2010}. However, in~\cite[Theorem 6.6]{BanDimGut-2010} it was proved that if there is a torsion-free Abelian countably compact topological group $G$ without non-trivial convergent sequences, then there exists a Tychonoff countably compact semigroup $S$ containing a bicyclic monoid. The existence of ZFC-example of countably compact topological semigroup which contains bicyclic monoid is still an open problem (see~\cite[Problem 7.1]{BanDimGut-2010}).

For a cardinal $\lambda$ {\em polycyclic monoid} $\mathcal{P}_\lambda$ is the semigroup with identity $1$ and zero $0$ given by the presentation:
\begin{equation*}
    \mathcal{P}_\lambda=\left\langle 0, 1, \left\{p_i\right\}_{i\in\lambda}, \left\{p_i^{-1}\right\}_{i\in\lambda}\mid  p_i^{-1}p_i=1, p_j^{-1}p_i=0 \hbox{~for~} i\neq j\right\rangle.
\end{equation*}

Observe that polycyclic monoid $\mathcal{P}_0$ is isomorphic to the semilattice $(\{0,1\},\min)$.
Polycyclic monoid is a generalization of the well-known bicyclic monoid (see~\cite[Chapter~3.4]{Lawson-1998}). More precisely, the bicyclic monoid with adjoined zero is isomorphic to the polycyclic monoid $\mathcal{P}_{1}$.
Polycyclic monoid $\mathcal{P}_{k}$ over a finite non-zero cardinal $k$ was introduced in \cite{Nivat-Perrot-1970}.
Algebraic and topological properties of polycyclic monoids were investigated
in~\cite{Bardyla-2016(1),BardGut-2016(1),BardGut-2016(2),Gutik-2015,Lawson-2009,Meakin-1993}.
%In~\cite{BardGut-2016(2)} were investigated embeddings of $\lambda$-polycyclic monoid into compact-like topological semigroups. More precisely, it was proved that for each cardinal $\lambda>1$ polycyclic monoid $\mathcal{P}_{\lambda}$ does not embed as a dense subsemigroup into a feebly compact topological semigroup.

A {\em directed} graph $E=(E^{0},E^{1},r,s)$ consists of disjoint sets $E^{0},E^{1}$ of {\em vertices} and {\em edges}, respectively, together with functions $s,r:E^{1}\rightarrow E^{0}$ which are called {\em source} and {\em range}, respectively. In this paper we refer to a directed graph simply as ``graph". We consider each vertex as a path of length zero. A path of non-zero length $x=e_{1}\ldots e_{n}$ in a graph $E$ is a finite sequence of edges $e_{1},\ldots,e_{n}$ such that $r(e_{i})=s(e_{i+1})$ for each positive integer $i<n$. By $\operatorname{Path}^+(E)$ we denote the set of all paths of a graph $E$ which have a non-zero length. We extend functions $s$ and $r$ on the set $\operatorname{Path}(E)=E^0\cup \operatorname{Path}^+(E)$ of all paths in the graph $E$ as follows: for each vertex $e\in E^0$ put $s(e)=r(e)=e$ and for each path of non-zero length $x=e_{1}\ldots e_{n}\in \operatorname{Path}^+(E)$ put $s(x)=s(e_{1})$ and $r(x)=r(e_{n})$. By $|x|$ we denote the length of a path $x$. Let $a=e_1\ldots e_n$ and $b=f_1\ldots f_m$ be two paths such that $|a|>0$, $|b|>0$ and $r(a)=s(b)$. Then by $ab$ we denote the path $e_1\ldots e_nf_1\ldots f_m$.
If $a$ is a vertex and $b$ is a path such that $s(b)=a$ ($r(b)=a$, resp.) then put $ab=b$ ($ba=b$, resp.).
A path $x$ is called a {\em prefix} of a path $y$ if there exists a path $z$ such that $y=xz$. An edge $e$ is called a {\em loop} if $s(e)=r(e)$. A path $x$ is called a {\em cycle} if $s(x)=r(x)$ and $|x|>0$. Vertices $a$ and $b$ of a graph $E$ are called {\em strongly connected} if there exist paths $u,v\in \op{Path}(E)$ such that $a=s(u)=r(v)$ and $b=s(v)=r(u)$. Define a relation $R$ on the set $E^0$ as follows: $(a,b)\in R$ iff vertices $a$ and $b$ are strongly connected. Simple verifications show that $R$ is an equivalence relation. Equivalence classes of the relation $R$ are called {\em strongly connected components} of a graph $E$.
%We say that a path $x$ is a {\em prefix} ({\em suffix}, resp.) of a path $y$ if $y=xz$ ($y=zx$, resp.) for some path $z$.
A graph $E$ is called {\em acyclic} if it contains no cycles.
%For each path $x$ by $\op{Pr}(x)$ we denote the set of all prefixes of $x$. For each subset $A\subset \op{Path}(E)$ put $\op{Pr}(A)=\cup_{x\in A}\op{Pr}(x)$.

For a given directed graph $E=(E^{0},E^{1},r,s)$ a graph inverse semigroup (or simply GIS) $G(E)$ over the graph $E$ is the semigroup with zero generated by the sets $E^{0}$, $E^{1}$ together with the set $E^{-1}=\{e^{-1}|\hbox{ } e\in E^{1}\}$ which is disjoint with $E^0\cup E^1$ satisfying the following relations for all $a,b\in E^{0}$ and $e,f\in E^{1}$:
\begin{equation*}
\begin{split}
&(1)\quad  a\cdot b=a \hbox{ if } a=b \hbox{ and } a\cdot b=0 \hbox{ if } a\neq b;\\
&(2)\quad s(e)\cdot e=e\cdot r(e)=e;\\
&(3)\quad e^{-1}\cdot s(e)=r(e)\cdot e^{-1}=e^{-1};\\
&(4)\quad e^{-1}\cdot f=r(e) \hbox{ if } e=f \hbox{ and } e^{-1}\cdot f=0 \hbox{ if } e\neq f.
\end{split}
\end{equation*}

Graph inverse semigroups are generalizations of the polycyclic monoids. In particular, for each cardinal $\lambda$ a polycyclic monoid $\mathcal{P}_{\lambda}$ is isomorphic to the graph inverse semigroup over the graph $E$ which consists of one vertex and $\lambda$ distinct loops.
However, by~\cite[Theorem~1]{Bardyla-2017(2)}, each graph inverse semigroup $G(E)$ is isomorphic to a subsemigroup of the polycyclic monoid $\mathcal{P}_{|G(E)|}$.

According to~\cite[Chapter~3.1]{Jones-2011}, each non-zero element of a graph inverse semigroup $G(E)$ can be uniquely represented as $uv^{-1}$ where $u,v\in \operatorname{Path}(E)$ and $r(u)=r(v)$. A semigroup operation in $G(E)$ is defined by the following way:
\begin{equation*}
\begin{split}
  &  u_1v_1^{-1}\cdot u_2v_2^{-1}=
    \left\{
      \begin{array}{ccl}
        u_1wv_2^{-1}, & \hbox{if~~} u_2=v_1w & \hbox{for some~} w\in \operatorname{Path}(E);\\
        u_1(v_2w)^{-1},   & \hbox{if~~} v_1=u_2w & \hbox{for some~} w\in \operatorname{Path}(E);\\
        0,              & \hbox{otherwise},
      \end{array}
    \right.\\
  &  \hbox{and } uv^{-1}\cdot 0=0\cdot uv^{-1}=0\cdot 0=0.
    \end{split}
\end{equation*}

Further, when we write an element of $G(E)$ in a form $uv^{-1}$ we always mean that $u,v\in\operatorname{Path}(E)$ and $r(u)=r(v)$.
Simple verifications show that $G(E)$ is an inverse semigroup and $(uv^{-1})^{-1}=vu^{-1}$, for each element $uv^{-1}\in G(E)$.

Graph inverse semigroups play an important role in the study of rings and $C^{*}$-algebras (see \cite{Abrams-2005,Ara-2007,Cuntz-1980,Kumjian-1998,Paterson-1999}).
Algebraic theory of graph inverse semigroups is well developed (see~\cite{Amal-2016, Bardyla-2017(2), Jones-2011, Jones-Lawson-2014, Lawson-2009, Mesyan-2016}).
Topological properties of graph inverse semigroups were investigated
in~\cite{Bardyla-2017(1),Bardyla-2018(1),Bardyla-2018,Mesyan-Mitchell-Morayne-Peresse-2013}.

This paper is inspired by the paper of Mesyan and Mitchell~\cite{Mesyan-2016} and can be regarded as an alternative look at the structure of graph inverse semigroups.

\section{A local structure of graph inverse semigroups}
By~\cite[Corollary~2]{Mesyan-2016}, two non-zero elements $ab^{-1}$ and $cd^{-1}$ of a GIS $G(E)$ are $\mathcal{D}$-equivalent iff $r(a)=r(b)=r(c)=r(d)$. Observe that each non-zero $\mathcal{D}$-class contains exactly one vertex of $E$. By $D_e$ we denote the $\mathcal{D}$-class which contains vertex $e\in E^0$. Put $D_e^0=D_e\cup\{0\}$.
\begin{lemma}\label{lemma0}
Let $G(E)$ be a GIS, $ab^{-1}\in D_e^0$ and $cd^{-1}\in D_f^0$. Then
$ab^{-1}\cdot cd^{-1}\in D_e^0\cup D_f^0$.
\end{lemma}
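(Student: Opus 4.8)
The plan is to reduce the statement directly to the explicit multiplication rule for $G(E)$ quoted in the Preliminaries, together with the $\mathcal{D}$-class description of~\cite[Corollary~2]{Mesyan-2016}. First I would dispatch the degenerate cases: if $ab^{-1}=0$ or $cd^{-1}=0$, then $ab^{-1}\cdot cd^{-1}=0$, and since $0\in D_e^0\cap D_f^0$ by definition, the conclusion holds. So I may henceforth assume that both $ab^{-1}$ and $cd^{-1}$ are non-zero, which by~\cite[Corollary~2]{Mesyan-2016} is exactly the condition $r(a)=r(b)=e$ and $r(c)=r(d)=f$.

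Next I would run through the three cases of the product formula for $ab^{-1}\cdot cd^{-1}$. If there is no path $w$ with $c=bw$ and no path $w$ with $b=cw$, then the product is $0\in D_e^0\cup D_f^0$ and we are done. The two remaining cases are interchanged by the anti-automorphism $x\mapsto x^{-1}$ of the inverse semigroup $G(E)$, which sends $ab^{-1}$ to $ba^{-1}$ and therefore maps each $D_e^0$ onto itself; hence it suffices to treat one of them, say the case $c=bw$ for some $w\in\operatorname{Path}(E)$, in which $ab^{-1}\cdot cd^{-1}=awd^{-1}$. The crux is the range computation $r(aw)=f$: if $|w|>0$ then $r(aw)=r(w)=r(bw)=r(c)=f$, whereas if $|w|=0$ then $w$ is the vertex $r(b)=e$, so $c=b$, hence $f=r(c)=r(b)=e$ and $aw=a$, giving $r(aw)=r(a)=e=f$. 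Since the two paths making up $awd^{-1}$ both have range $f$, a further application of~\cite[Corollary~2]{Mesyan-2016} shows $awd^{-1}\in D_f^0$; applying $x\mapsto x^{-1}$ then yields that in the other case the product belongs to $D_e^0$. This covers all possibilities and completes the argument.

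I do not anticipate any real difficulty here: the lemma is in essence a bookkeeping check against the multiplication rule. The only point that requires a moment's attention is the boundary situation in which the connecting path $w$ has length zero, where one must invoke the conventions $xe=x$ and $ey=y$ for a vertex $e$ and verify that the range identities still hold, as indicated above. It is also worth recording explicitly that any formal expression $uv^{-1}$ returned by the product formula, with $r(u)=r(v)$, automatically names a (non-zero) element of $G(E)$ by the uniqueness of the normal form recalled in the Preliminaries, so that no separate verification is needed to see that $awd^{-1}$ is a genuine element of the semigroup.
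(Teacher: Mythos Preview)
Your proposal is correct and follows essentially the same approach as the paper: a case split on the multiplication rule, followed by a range computation showing that the resulting path ends at $e$ or $f$. The only cosmetic differences are that you invoke the anti-automorphism $x\mapsto x^{-1}$ to reduce the two nontrivial cases to one (the paper simply treats both directly in one line each), and you separate out the $|w|=0$ boundary case explicitly (the paper's chain $r(aw)=r(w)=r(c)$ already covers it via the path-concatenation conventions).
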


\begin{proof}
Fix any elements $ab^{-1}\in D_e^0$ and $cd^{-1}\in D_f^0$. If $ab^{-1}\cdot cd^{-1}=0$ then $ab^{-1}\cdot cd^{-1}\in D_e^0\cup D_f^0$. Assume that $ab^{-1}\cdot cd^{-1}\neq 0$. Then there exists a path $w\in\operatorname{Path}(E)$ such that either $ab^{-1}\cdot cd^{-1}=awd^{-1}$ or $ab^{-1}\cdot cd^{-1}=a(dw)^{-1}$. In the first case $c=bw$ which yields that $r(aw)=r(w)=r(c)=r(d)=f$. Hence $ab^{-1}\cdot cd^{-1}\in D_f$. In the second case $b=cw$ which implies that $r(dw)=r(w)=r(b)=r(a)=e$. Hence $ab^{-1}\cdot cd^{-1}\in D_e$.
\end{proof}

Observe that if $uv^{-1}\in D_e$ then $(uv^{-1})^{-1}=vu^{-1}\in D_e$ which provides the following:
\begin{corollary}\label{corollary1}
For each vertex $e$ of a graph $E$, $D_e^0$ is an inverse subsemigroup of $G(E)$.
\end{corollary}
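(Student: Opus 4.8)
The plan is to verify the two defining features of an \emph{inverse subsemigroup}: closure under the multiplication of $G(E)$, and closure under the inversion of $G(E)$. Once both hold, the inverse-semigroup axioms for $D_e^0$ follow automatically from those of $G(E)$.

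First I would check closure under multiplication. Given arbitrary $ab^{-1}, cd^{-1}\in D_e^0$, apply Lemma~\ref{lemma0} with $f=e$: this yields $ab^{-1}\cdot cd^{-1}\in D_e^0\cup D_e^0=D_e^0$, while any product involving $0$ lands in $D_e^0$ trivially. Hence $D_e^0$ is a subsemigroup of $G(E)$.

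Next, closure under inverses. This is precisely the observation recorded just before the statement: if $uv^{-1}\in D_e$ then, by~\cite[Corollary~2]{Mesyan-2016}, $r(u)=r(v)=e$, so the $G(E)$-inverse $(uv^{-1})^{-1}=vu^{-1}$ again satisfies $r(v)=r(u)=e$ and thus lies in $D_e$; and $0$ is its own inverse. So $D_e^0$ is closed under the inversion map of $G(E)$.

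Finally, I would invoke the standard fact that a subsemigroup $T$ of an inverse semigroup $S$ closed under the inversion of $S$ is itself an inverse semigroup: for $x\in T$ the element $x^{-1}\in T$ satisfies $xx^{-1}x=x$ and $x^{-1}xx^{-1}=x^{-1}$ inside $T$, and uniqueness of such an element is inherited from $S$. Taking $S=G(E)$ and $T=D_e^0$ completes the argument. There is no genuine obstacle here; the only point meriting a moment's care is this last, routine, observation — namely that closure under the ambient inversion (rather than merely regularity of $D_e^0$) is what is needed to conclude $D_e^0$ is an inverse semigroup.
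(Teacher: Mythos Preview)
Your proof is correct and follows exactly the paper's approach: closure under multiplication comes from Lemma~\ref{lemma0} with $f=e$, and closure under inversion is precisely the observation recorded immediately before the corollary. The paper simply states these two ingredients without the additional commentary about why closure under the ambient inversion suffices, but the argument is the same.
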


Further we need the following denotations. For any vertex $e$ of a graph $E$ put:
$$I_e=\{u\in \op{Path}(E)\mid r(u)=e\};$$
$$Q_e=\{u\in I_e\mid r(v)\neq e, \hbox{ for each non-trivial prefix }v \hbox{ of }u\};$$
$$C_e=\{u\in I_e\mid s(u)=r(u)=e\};$$
$$C^1_e=C_e\cap Q_e=\{u\in C_e\mid r(v)\neq e,\hbox{ for each non-trivial prefix } v \hbox{ of } u\}.$$
By $\langle C_e\rangle$ ($\langle C^1_e\rangle$, resp.) we denote the inverse subsemigroup of $G(E)$ which is generated by the set $C_e\cup\{0\}$
($C^1_e\cup\{0\}$, resp.). Observe that $e\in C^1_e$ and $e$ is the identity of the semigroup $\langle C_e\rangle$.
The following theorem describes the structure of the semigroup $\langle C_e\rangle$.
\begin{theorem}\label{poly}
For each vertex $e$ of any graph $E$ the semigroup $\langle C_e\rangle$ is isomorphic to the polycyclic monoid $\mathcal{P}_{|C_e^1\setminus\{e\}|}$.
\end{theorem}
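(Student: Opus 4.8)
The plan is to exhibit a concrete presentation of $\langle C_e\rangle$ matching that of $\mathcal{P}_\lambda$ with $\lambda=|C_e^1\setminus\{e\}|$, and then check it is an isomorphism. The key observation is that every cycle based at $e$ factors uniquely as a concatenation of cycles in $C_e^1$: if $u\in C_e$ has $|u|>0$, write $u=u_1u'$ where $u_1$ is the shortest non-trivial prefix of $u$ with $r(u_1)=e$ (so $u_1\in C_e^1$) and $u'\in C_e$, and iterate; this terminates and gives $u=u_1\cdots u_n$ with each $u_i\in C_e^1\setminus\{e\}$, uniquely. Consequently $C_e$ is the free monoid on the set $C_e^1\setminus\{e\}$ (with identity $e$), and every element of $\langle C_e\rangle$ has the form $uv^{-1}$ with $u,v\in C_e$, together with $0$.

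First I would set up the assignment: let $\lambda=|C_e^1\setminus\{e\}|$ and fix a bijection $i\mapsto c_i$ from $\lambda$ onto $C_e^1\setminus\{e\}$. Define a map on generators of $\mathcal{P}_\lambda$ by $1\mapsto e$, $0\mapsto 0$, $p_i\mapsto c_i$, $p_i^{-1}\mapsto c_i^{-1}$. I must check that the defining relations of $\mathcal{P}_\lambda$ are satisfied in $\langle C_e\rangle$: indeed $c_i^{-1}c_i=r(c_i)=e$ by relation (4)-type computations in $G(E)$ (using $r(c_i)=e$), and for $i\neq j$ we have $c_i^{-1}c_j=0$ because neither of $c_i,c_j$ is a prefix of the other — this is exactly the minimality built into $C_e^1$, since a non-trivial proper prefix of $c_j$ cannot have range $e$. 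Hence the assignment extends to a semigroup homomorphism $\varphi\colon\mathcal{P}_\lambda\to\langle C_e\rangle$, and its image contains all generators $C_e\cup\{0\}$, so $\varphi$ is surjective.

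Next I would prove injectivity. Using the normal form for $\mathcal{P}_\lambda$ (every non-zero element is uniquely $p_{\alpha}p_{\beta}^{-1}$ where $p_\alpha=p_{i_1}\cdots p_{i_k}$, $p_\beta=p_{j_1}\cdots p_{j_m}$ are positive words), $\varphi$ sends $p_\alpha p_\beta^{-1}$ to $uv^{-1}$ where $u=c_{i_1}\cdots c_{i_k}$ and $v=c_{j_1}\cdots c_{j_m}$ lie in $C_e$. By the unique factorization of elements of $C_e$ over $C_e^1\setminus\{e\}$ established above, distinct normal forms in $\mathcal{P}_\lambda$ map to distinct pairs $(u,v)$, hence to distinct elements $uv^{-1}$ of $G(E)$ (recall non-zero elements of $G(E)$ have a unique representation $uv^{-1}$). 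And no non-zero element maps to $0$. Therefore $\varphi$ is a bijection, hence an isomorphism.

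The main obstacle is the unique factorization claim for $C_e$: one must show that the greedy decomposition into shortest $e$-returning prefixes is well-defined, terminates, lands in $C_e^1\setminus\{e\}$ at each stage, and is the \emph{only} such decomposition. Termination is clear from strictly decreasing length; membership in $C_e^1$ is the definition of "shortest prefix with range $e$"; and uniqueness follows because if $u=u_1u'=u_1''u''$ with $u_1,u_1''\in C_e^1\setminus\{e\}$ then one of $u_1,u_1''$ is a prefix of the other (both are prefixes of $u$), and minimality of length forces $u_1=u_1''$, after which one cancels and induces on $|u|$. Once this is in hand, the verification that $\varphi$ respects the $G(E)$ multiplication rule reduces to routine prefix-comparison bookkeeping, and I would only sketch it.
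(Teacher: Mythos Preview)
Your proof is correct. Both your argument and the paper's rest on the same key fact---that $C_e$ is the free monoid on $C_e^1\setminus\{e\}$---and both match the normal forms $uv^{-1}$ in $\langle C_e\rangle$ against the normal forms $p_\alpha p_\beta^{-1}$ in $\mathcal{P}_\lambda$. The one difference is the direction of the map: the paper defines $f\colon\langle C_e\rangle\to\mathcal{P}_\lambda$ on normal forms and then verifies it is a homomorphism by a three-case computation tracking the multiplication rule in $G(E)$; you go the other way, defining $\varphi\colon\mathcal{P}_\lambda\to\langle C_e\rangle$ via the universal property of the presentation, so the homomorphism property is automatic and the work shifts to the injectivity check via normal forms on both sides. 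Your route is marginally cleaner (checking $c_i^{-1}c_j=0$ for $i\neq j$ is a one-liner, and you actually justify the unique-factorisation step that the paper leaves as ``easy to check''), but the two arguments are duals of each other with identical substance.
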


\begin{proof}
Fix any vertex $e\in E^0$.
Put $\lambda=|C^1_e\setminus\{e\}|$. Let $C^1_e\setminus\{e\}=\{u_{\alpha}\}_{\alpha\in \lambda}$. For convenience we denote $e$ by $u_{-1}$.
%Observe that for each element $v\in C_e\setminus\{e\}$ there exist a unique finite set of elements $\{u_{\alpha_1}, u_{\alpha_2},\ldots, u_{\alpha_n}\}\subset C^1_e\setminus\{e\}$ such that $v=u_{\alpha_1}u_{\alpha_2}\ldots u_{\alpha_n}$.
It is easy to check that
$\langle C^1_e\rangle=\{uv^{-1}\mid u,v\in C_e\}\cup\{0\}=\langle C_e\rangle.$

%Observe that any element $x$ of $C_e$ has a unique representation in a form $uv^{-1}$ for some paths $u,v\in C_e$. W

Let $G=\{p_{\alpha}\}_{\alpha\in \lambda}\cup\{p_{\alpha}^{-1}\}_{\alpha\in \lambda}$ be the set of generators of the polycyclic monoid $\mathcal{P}_{\lambda}$.
Define a map $f: \langle C_e\rangle \rightarrow \mathcal{P}_{\lambda}$ consecutively extending it as follows. At first we define $f$ on $C^1_e$ by putting $f(u_{-1})=1$ and $f(u_{\alpha})=p_{\alpha}$ for each $\alpha\in\lambda$. Let $u\in C_e\setminus\{e\}$ be any element. It is easy to check that $u$ has a unique representation $u=u_{\alpha_1}u_{\alpha_2}\ldots u_{\alpha_n}$ as a product of elements of $C^1_e\setminus\{e\}$. We put  $f(u)=p_{\alpha_{1}}p_{\alpha_{2}}\ldots p_{\alpha_{n}}$. Observe that any non-zero element of $\langle C_e\rangle$ has a unique representation in a form $uv^{-1}$ for some paths $u,v\in C_e$. Finally, put $f(uv^{-1})=f(u)f(v)^{-1}$ and $f(0)=0$.

%$f(u_{-1})=1$ and $f(u_{\alpha})=p_{\alpha}$, for each $\alpha\in\lambda$. We extend the map $f$ on the set $\langle C_e\rangle$ as follows:  for %each element $u=u_{\alpha_1}u_{\alpha_2}\ldots u_{\alpha_n}\in C_e$ put $f(u)=p_{\alpha_{1}}p_{\alpha_{2}}\ldots p_{\alpha_{n}}$.
%For each non-zero element $uv^{-1}\in \langle C_e\rangle$ put $f(uv^{-1})=f(u)f(v)^{-1}$ and $f(0)=0$. Since each non-zero element of $\langle %C_e\rangle$ can be uniquely represented as $uv^{-1}$ for some paths $u,v\in \operatorname{Path}(E)$ the map $f$ is defined correctly.

Obviously, $f$ is a bijection. Let us show that $f$ is a homomorphism. Fix any elements $ab^{-1},cd^{-1}\in \langle C_e\rangle$. Let
$$a=u_{\alpha_1}\ldots u_{\alpha_n},\quad b=u_{\beta_1}\ldots u_{\beta_{m}},\quad c=u_{\gamma_1},\ldots u_{\gamma_k},\quad d=u_{\delta_1}\ldots u_{\delta_t}$$
be (unique) representations of elements $a,b,c,d$ as a product of elements of $C^1_e$ (here we agree that if some of the elements $a,b,c$ or $d$ are equal to $e$, then their representations are equal to $u_{-1}$). There are three cases to consider:
\begin{itemize}
\item[$(1)$] $b$ is a prefix of $c$;
\item[$(2)$] $c$ is a prefix of $b$;
\item[$(3)$] $ab^{-1}\cdot cd^{-1}=0$.
\end{itemize}
Suppose that case $1$ holds, i.e., $u_{\gamma_1}\ldots u_{\gamma_k}=u_{\beta_1}\ldots u_{\beta_{m}}u_{\gamma_{m+1}}\ldots u_{\gamma_{k}}$. Observe that
$$ab^{-1}\cdot cd^{-1}=u_{\alpha_1}\ldots u_{\alpha_n}u_{\gamma_{m+1}}\ldots u_{\gamma_{k}}(u_{\delta_1}\ldots u_{\delta_t})^{-1}.$$
Then
\begin{equation*}
f(ab^{-1}\cdot cd^{-1})=f(u_{\alpha_1}\ldots u_{\alpha_n}u_{\gamma_{m+1}}\ldots u_{\gamma_{k}}(u_{\delta_1}\ldots u_{\delta_t})^{-1})=p_{\alpha_1}\ldots p_{\alpha_n}p_{\gamma_{m+1}}\ldots p_{\gamma_{k}}(p_{\delta_1}\ldots p_{\delta_t})^{-1}.
\end{equation*}
On the other hand,
\begin{equation*}
\begin{split}
&f(ab^{-1})\cdot f(cd^{-1})=p_{\alpha_1}\ldots p_{\alpha_n}(p_{\beta_m}^{-1}\ldots p_{\beta_1}^{-1}\cdot p_{\beta_1}\ldots p_{\beta_{m}})p_{\gamma_{m+1}}\ldots p_{\gamma_{k}} (p_{\delta_1}\ldots p_{\delta_t})^{-1}=\\
&= p_{\alpha_1}\ldots p_{\alpha_n}p_{\gamma_{m+1}}\ldots p_{\gamma_{k}}(p_{\delta_1}\ldots p_{\delta_t})^{-1}=f(ab^{-1}\cdot cd^{-1}).
\end{split}
\end{equation*}

Case $2$ is similar to case $1$. Consider case $3$. In this case there exists a positive integer $i$ such that $u_{\beta_j}=u_{\gamma_j}$ for every $j<i$ and $u_{\beta_i}\neq u_{\gamma_i}$. Observe that $f(ab^{-1}\cdot cd^{-1})=f(0)=0$ and
\begin{equation*}
\begin{split}
&f(ab^{-1})\cdot f(cd^{-1})= p_{\alpha_1}\ldots p_{\alpha_n}p_{\beta_m}^{-1}\ldots p_{\beta_i}^{-1}(p_{\beta_{i-1}}^{-1}\ldots p_{\beta_{1}}^{-1}\cdot p_{\beta_1}\ldots p_{\beta_{i-1}})p_{\gamma_{i}}\ldots p_{\gamma_{k}}(p_{\delta_1}\ldots p_{\delta_t})^{-1}=\\
&=p_{\alpha_1}\ldots p_{\alpha_n}p_{\beta_m}^{-1}\ldots (p_{\beta_i}^{-1}\cdot p_{\gamma_{i}})\ldots p_{\gamma_{k}} (p_{\delta_1}\ldots p_{\delta_t})^{-1}=0=f(ab^{-1}\cdot cd^{-1}).\\
\end{split}
\end{equation*}
Hence $f$ is an isomorphism between semigroups $\langle C_e\rangle$ and $\mathcal{P}_{|C_e^1\setminus\{e\}|}$.
\end{proof}

The following theorem describes the structure of a subsemigroup $D_e^0$ of an arbitrary GIS.
\begin{theorem}\label{D}
Let $E$ be any graph and $e\in E^0$. Then the semigroup $D_e^0$ is isomorphic to the Brandt $Q_e^0$-extension of the polycyclic monoid $\mathcal{P}_{|C_e^1\setminus\{e\}|}$. %the Brandt $I_e^-$-extension of the $(|C_e|-1)$-polycyclic monoid $\mathcal{P}_{|C_e|-1}$.
\end{theorem}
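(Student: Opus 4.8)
The plan is to produce an explicit isomorphism, using Theorem~\ref{poly} and Corollary~\ref{corollary1}. The conceptual core is a \emph{unique factorisation lemma}: every path $u\in I_e$ admits a unique representation $u=q_uc_u$ with $q_u\in Q_e$ and $c_u\in C_e$. I would prove this by taking $q_u$ to be the shortest prefix of $u$ with range $e$ (so by minimality no nontrivial prefix of $q_u$ returns to $e$, i.e.\ $q_u\in Q_e$) and letting $c_u$ be the complementary suffix, which satisfies $s(c_u)=r(q_u)=e=r(c_u)$ and hence lies in $C_e$; for uniqueness, of any two such factorisations one $q$-part is a prefix of the other, and if this were a nontrivial proper prefix it would contradict membership in $Q_e$ of the longer $q$-part. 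I expect this lemma — in particular the borderline cases of the trivial path $e$ and of simple cycles at $e$, all of which lie in $Q_e$, and nailing down exactly which cyclic prefixes are admissible — to be the most delicate point of the whole proof.

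Granting the lemma, write $\lambda=|C^1_e\setminus\{e\}|$ and let $f\colon\langle C_e\rangle\to\mathcal P_\lambda$ be the isomorphism of Theorem~\ref{poly}. Since every non-zero element of $\langle C_e\rangle$ is uniquely $c_1c_2^{-1}$ with $c_1,c_2\in C_e$, and every element of $D_e$ is $ab^{-1}$ with $a,b\in I_e$, I would define $\Phi\colon D_e^0\to\mathcal B^0_{Q_e}(\mathcal P_\lambda)$ by $\Phi(0)=0$ and $\Phi(ab^{-1})=(q_a,\,f(c_ac_b^{-1}),\,q_b)$, where $a=q_ac_a$ and $b=q_bc_b$ are the factorisations above. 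Well-definedness is immediate from uniqueness of the factorisation. For surjectivity, given $(q,p,q')$ with $p\neq 0$, I would set $c_1c_2^{-1}=f^{-1}(p)$ and observe that $qc_1,q'c_2\in I_e$ have factorisations $(q,c_1)$ and $(q',c_2)$ again by uniqueness, so $\Phi\bigl((qc_1)(q'c_2)^{-1}\bigr)=(q,p,q')$; injectivity is the same computation run backwards together with uniqueness of the $uv^{-1}$-representation in $G(E)$.

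The remaining task, and the bulk of the work, is to check that $\Phi$ is a homomorphism. Fix $ab^{-1},cd^{-1}\in D_e$. The product $ab^{-1}\cdot cd^{-1}$ is non-zero precisely when one of $b,c$ is a prefix of the other. Say $c=bw$; then $s(w)=e=r(w)$ forces $w\in C_e$, the product equals $awd^{-1}$, and $aw=q_a(c_aw)$ with $c_aw\in C_e$ is the factorisation of $aw$. Comparing the factorisations $q_cc_c$ and $q_b(c_bw)$ of $c$ and using uniqueness gives $q_c=q_b$ and $c_c=c_bw$; since $c_b\in C_e$ we have $c_b^{-1}c_b=e$ in $\langle C_e\rangle$, so $(c_aw)c_d^{-1}=(c_ac_b^{-1})(c_cc_d^{-1})$, and as $f$ is a homomorphism we get $\Phi(ab^{-1}\cdot cd^{-1})=(q_a,f((c_ac_b^{-1})(c_cc_d^{-1})),q_d)=\Phi(ab^{-1})\cdot\Phi(cd^{-1})$. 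The case $b=cw$ is symmetric. If instead $ab^{-1}\cdot cd^{-1}=0$, that is, $b$ and $c$ are incomparable, then from the factorisation lemma one obtains that $q_b\neq q_c$ or that $c_b,c_c$ are incomparable: in the first case the outer coordinates of $\Phi(ab^{-1})$ and $\Phi(cd^{-1})$ differ, so their Brandt product is $0$; in the second, $c_b^{-1}c_c=0$ in $\langle C_e\rangle\cong\mathcal P_\lambda$, so again the product is $0$. This covers all cases, so $\Phi$ is the desired isomorphism.
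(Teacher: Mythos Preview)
Your argument is essentially the paper's own proof: the same unique factorisation $u=u_1u_2$ with $u_1\in Q_e$, $u_2\in C_e$, the same map $uv^{-1}\mapsto (u_1,f(u_2v_2^{-1}),v_1)$ built from the isomorphism $f$ of Theorem~\ref{poly}, and the same three-case homomorphism check (with the zero case split into $q_b\neq q_c$ versus $c_b,c_c$ incomparable). You supply a bit more detail on why the factorisation exists and is unique, and you correctly flag the cyclic-prefix borderline as the one place needing care; otherwise the two arguments coincide.
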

\begin{proof}
Recall that $D_e=\{uv^{-1}\mid r(u)=r(v)=e\}$. The proof is based on the following obvious fact: each element $u\in I_e$ can be uniquely represented as follows: $u=u_1u_2$ where $u_1\in Q_e$ and $u_2\in C_e$ (here both $u_1$ and $u_2$ can be equal to $e$). By Theorem~\ref{poly}, the semigroup $\langle C_e\rangle$ is isomorphic to the polycyclic monoid $\mathcal{P}_{|C_e^1\setminus\{e\}|}$. Define the map $h:D_e^0\rightarrow B_{Q_e}^0(\mathcal{P}_{|C_e^1\setminus\{e\}|})$ as follows: $h(0)=0$ and $h(uv^{-1})=(u_1,f(u_2v_2^{-1}),v_1)$ for each non-zero element $uv^{-1}=u_1u_2(v_1v_2)^{-1}\in D_e$ where $u_1,v_1\in Q_e$, $u_2,v_2\in C_e$ and $f$ is an isomorphism between semigroups $\langle C_e\rangle$ and $\mathcal{P}_{|C_e^1\setminus\{e\}|}$ defined in Theorem~\ref{poly}. We remark that $f(e)=(e,1,e)$. Suppose that $u\neq v$ for some paths $u,v\in I_e$. Then $u_1\neq v_1$ or $u_2\neq v_2$ which implies that the map $h$ is injective. Since for each non-zero element $(a,uv^{-1},b)\in B_{Q_e}^0(\mathcal{P}_{|C_e^1\setminus\{e\}|})$
%there exists the element $af^{-1}(u)(bf^{-1}(v))^{-1}\in D_e$ such that
we have
$$h(af^{-1}(u)(bf^{-1}(v))^{-1})=(a,f(f^{-1}(u)f^{-1}(v)^{-1}),b)=(a,ff^{-1}(uv^{-1}),b)=(a,uv^{-1},b)$$
the map $h$ is bijective. Now it remains to show that $h$ is a homomorphism. Fix any elements $ab^{-1},cd^{-1}\in D_e$. Following the main idea of the proof we can uniquely represent elements $a,b,c,d\in I_e$ as follows: $a=a_1a_2, b=b_1b_2, c=c_1c_2$ and $d=d_1d_2$ where $a_1,b_1,c_1,d_1\in Q_e$ and $a_2,b_2,c_2,d_2\in C_e$. %Recall that $h(ab^{-1})=(a_1,f(a_2b_2^{-1}),b_1)$ and $h(cd^{-1})=(c_1,f(c_2d_2^{-1}),d_1)$.
There are three cases to consider:
\begin{itemize}
\item[$(1)$] There exists $w\in\op{Path}(E)$ such that $ab^{-1}\cdot cd^{-1}=awd^{-1}$, i.e., $c=bw$;
\item[$(2)$] there exists $w\in\op{Path}(E)$ such that $ab^{-1}\cdot cd^{-1}=a(dw)^{-1}$, i.e., $b=cw$;
\item[$(3)$] $ab^{-1}\cdot cd^{-1}=0$.
\end{itemize}
Consider case $1$. Observe that $s(w)=r(b)=r(c)=r(w)$ which implies that $c_1=b_1$ and $c_2=b_2w$. Hence
\begin{equation*}
\begin{split}
&h(ab^{-1})\cdot h(cd^{-1})=(a_1,f(a_2b_2^{-1}),b_1)\cdot (b_1,f(b_2wd_2^{-1}),d_1)=(a_1,f(a_2b_2^{-1})\cdot f(b_2wd_2^{-1}), d_1)=\\
&=(a_1,f(a_2b_2^{-1}\cdot b_2wd_2^{-1}), d_1)=(a_1,f(a_2wd_2^{-1}), d_1)=h(awd^{-1}).
\end{split}
\end{equation*}
Consider case $2$. Observe that $s(w)=r(c)=r(b)=r(w)$ which implies that $c_1=b_1$ and $b_2=c_2w$. Similar calculations as in case $1$ show that $h(ab^{-1})\cdot h(cd^{-1})=h(a(dw)^{-1})$.

Consider case $3$. Observe that for each path $w\in \op{Path}(E)$ neither $b=cw$ nor $c=bw$. Then one of the following two subcases holds:
\begin{itemize}
\item[$(3.1)$] $b_1\neq c_1$;
\item[$(3.2)$] $b_1=c_1$, but for any $w\in\op{Path}(E)$ neither $b_2=c_2w$ nor $c_2=b_2w$.
\end{itemize}
Consider subcase $3.1$. Then
$$
h(ab^{-1})\cdot h(cd^{-1})=(a_1,f(a_2b_2^{-1}),b_1)\cdot (c_1,f(c_2d_2^{-1}),d_1)=0=f(0).
$$
Consider subcase $3.2$. Then
\begin{equation*}
\begin{split}
&h(ab^{-1})\cdot h(cd^{-1})=(a_1,f(a_2b_2^{-1}),b_1)\cdot (b_1,f(c_2d_2^{-1}),d_1)=(a_1,f(a_2b_2^{-1})\cdot f(c_2d_2^{-1}), d_1)=\\
&=(a_1,f(a_2b_2^{-1}\cdot c_2d_2^{-1}), d_1)=(a_1,0, d_1)=0=h(0).
\end{split}
\end{equation*}
Hence the map $h$ is an isomorphism between semigroups $D_e^0$ and $B_{Q_e}^0(\mathcal{P}_{|C_e^1\setminus\{e\}|})$.
\end{proof}

Graph $E$ is called {\em acyclic at a vertex} $e\in E^0$ if $C_e=\{e\}$.
\begin{corollary}\label{matrix}
Let $E$ be a graph which is acyclic at a vertex $e$. Then the subsemigroup $D_e^0$ of $G(E)$ is isomorphic to the semigroup of $I_e{\times}I_e$-matrix units $\mathcal{B}_{I_e}^0$.
\end{corollary}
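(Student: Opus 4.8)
The plan is to read this off directly from Theorem~\ref{D} by computing the two index sets that occur there under the hypothesis $C_e=\{e\}$. First I would check that $C_e^1\setminus\{e\}=\emptyset$. Since the vertex $e$ has no non-trivial prefix, $e\in Q_e$, and hence $e\in C_e^1=C_e\cap Q_e$; on the other hand the assumption $C_e=\{e\}$ forces $C_e^1\subseteq C_e=\{e\}$, so $C_e^1=\{e\}$ and therefore $|C_e^1\setminus\{e\}|=0$. As recorded in the preliminaries, $\mathcal{P}_0$ is isomorphic to the semilattice $(\{0,1\},\min)$, so the polycyclic monoid appearing in the conclusion of Theorem~\ref{D} is here the two-element semilattice.

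Next I would identify the index set of the Brandt extension, namely I would show $Q_e=I_e$. The inclusion $Q_e\subseteq I_e$ is immediate from the definition of $Q_e$. Conversely, by the factorization used in the proof of Theorem~\ref{D}, every $u\in I_e$ can be written uniquely as $u=u_1u_2$ with $u_1\in Q_e$ and $u_2\in C_e$; since $C_e=\{e\}$, necessarily $u_2=e$, and hence $u=u_1\in Q_e$. This is precisely the step where acyclicity at $e$ enters: with no cycles based at $e$, every path into $e$ is already ``fresh'', so there is no nontrivial $C_e$-part to record.

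Combining these two observations with Theorem~\ref{D} gives
$$D_e^0\cong\mathcal{B}_{Q_e}^0\bigl(\mathcal{P}_{|C_e^1\setminus\{e\}|}\bigr)=\mathcal{B}_{I_e}^0\bigl((\{0,1\},\min)\bigr)=\mathcal{B}_{I_e}^0,$$
where the last equality is just the notational convention introduced for Brandt $X^0$-extensions of the semilattice $(\{0,1\},\min)$, i.e.\ the semigroup of $I_e\times I_e$-matrix units. I do not expect any genuine obstacle here; the only point requiring care is the bookkeeping identity $Q_e=I_e$, which is exactly what collapses the polycyclic monoid to the trivial semilattice and turns the Brandt extension into a pure semigroup of matrix units.
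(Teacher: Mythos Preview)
Your proposal is correct and follows essentially the same approach as the paper: observe that acyclicity at $e$ gives $C_e=\{e\}$ and $Q_e=I_e$, then apply Theorem~\ref{D} and the identification $\mathcal{P}_0\cong(\{0,1\},\min)$ to conclude $D_e^0\cong\mathcal{B}_{I_e}^0$. The only quibble is the closing remark: it is the equality $C_e^1\setminus\{e\}=\emptyset$ (not $Q_e=I_e$) that collapses the polycyclic monoid to the two-element semilattice, while $Q_e=I_e$ is what identifies the index set of the Brandt extension; both facts are consequences of $C_e=\{e\}$.
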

\begin{proof}
Recall that by $B_X^0$ we denote the semigroup $B_X^0(\mathcal{P}_0)$.
Since graph $E$ is acyclic at a vertex $e$ we obtain that $I_e=Q_e$ and $C_e=\{e\}$. By Theorem~\ref{D}, the semigroup $D_e^0$ is isomorphic to the semigroup $B_{I_e}^0(\mathcal{P}_{0})$.
%Fix an arbitrary elements $ab^{-1},cd^{-1}\in D_e$. Observe that $ab^{-1}\cdot cd^{-1}\neq 0$ iff $b=cw$ or $c=bw$. Then either $s(w)=r(c)=e$ or $s(w)=r(b)=e$. Hence $s(w)=e$. Similar arguments imply that $r(w)=e$. Since graph $E$ is acyclic at vertex $e$ we obtain that $w=e$ which implies that $ab^{-1}\cdot cd^{-1}\neq 0$ iff $b=c$. The definition of the semigroup operation in $G(E)$ implies that $ab^{-1}\cdot bd^{-1}=ad^{-1}$. Hence the map $f:\mathcal{M}_{I_e}\rightarrow D_e\cup\{0\}$ defined as follows: $f((u,v))=uv^{-1}$ and $f(0)=0$ is an isomorphism.
\end{proof}

By~\cite[Corollary~2]{Mesyan-2016}, two non-zero elements $ab^{-1}$ and $cd^{-1}$ of a GIS $G(E)$ are $\mathcal{J}$-equivalent iff there exist elements $u,v\in \op{Path}(E)$ such that $s(u)=r(a)=r(v)$ and $r(u)=r(c)=s(v)$. There exists a one to one correspondence between the set of strongly connected components of a graph $E$ and non-zero $\mathcal{J}$-classes of a GIS $G(E)$. More precisely, $J\cap E^0$ is a strongly connected component of a graph $E$ for each non-zero $\mathcal{J}$-class $J$ of $G(E)$. Therefore, by $J_A$ we denote a $\mathcal{J}$-class which contains a strongly connected component $A\subset E^0$.

Observe that for each strongly connected component $A$ of a graph $E$, $\mathcal{J}_A=\cup_{e\in A}D_e$. Hence Lemma~\ref{lemma0} provides the following:
\begin{corollary}\label{J}
For each strongly connected component $A$ of a graph $E$ the set $J_A^0=J_A\cup\{0\}$ is an inverse subsemigroup of $G(E)$.
\end{corollary}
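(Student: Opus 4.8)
The plan is to check directly that $J_A^0$ is closed under the semigroup operation and under inversion; since $G(E)$ is an inverse semigroup, these two facts suffice to conclude that $J_A^0$ is an \emph{inverse} subsemigroup, because the unique inverse of an element of $J_A^0$ computed inside $G(E)$ will then already lie in $J_A^0$. As $0$ absorbs everything and $0\in J_A^0$, I may restrict attention to non-zero elements throughout, and the preliminary observation to record is the stated identity $\mathcal{J}_A=\bigcup_{e\in A}D_e$, so that membership in $J_A$ for a non-zero element is the same as membership in $D_e$ for some vertex $e\in A$.

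First I would treat inversion. A non-zero element of $J_A$ has the form $uv^{-1}$ with $r(u)=r(v)=e$ for some $e\in A$, i.e. $uv^{-1}\in D_e$; then $(uv^{-1})^{-1}=vu^{-1}$ still satisfies $r(v)=r(u)=e$, so $vu^{-1}\in D_e\subseteq J_A$, exactly as noted just before Corollary~\ref{corollary1}. Hence $J_A^0$ is closed under taking inverses.

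Next I would treat products. Given non-zero $ab^{-1},cd^{-1}\in J_A$, use $\mathcal{J}_A=\bigcup_{e\in A}D_e$ to pick vertices $e,f\in A$ with $ab^{-1}\in D_e^0$ and $cd^{-1}\in D_f^0$. Lemma~\ref{lemma0} then gives $ab^{-1}\cdot cd^{-1}\in D_e^0\cup D_f^0$, and since $e,f\in A$ we have $D_e^0\cup D_f^0\subseteq J_A\cup\{0\}=J_A^0$. So the product lies in $J_A^0$, and together with the previous paragraph this shows $J_A^0$ is an inverse subsemigroup of $G(E)$.

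There is essentially no obstacle here: the only real input is Lemma~\ref{lemma0} combined with the identification $\mathcal{J}_A=\bigcup_{e\in A}D_e$ (which itself rests on the description of $\mathcal{J}$-classes of a GIS recalled above from \cite{Mesyan-2016}); the remaining points — absorption of the zero and the behaviour of inversion on elements written as $uv^{-1}$ — are routine bookkeeping, which is why the statement is phrased as a corollary rather than a theorem.
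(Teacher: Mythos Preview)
Your proof is correct and follows exactly the route indicated in the paper: the paper derives the corollary directly from Lemma~\ref{lemma0} together with the identity $\mathcal{J}_A=\bigcup_{e\in A}D_e$ (and, implicitly, the observation before Corollary~\ref{corollary1} that each $D_e$ is closed under inversion), which is precisely what you have unpacked. There is nothing to add.
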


Let $E$ be a graph and $X$ be any non-empty subset of $E^0$. By $E_X$ we denote the induced (by the set $X$) subgraph of the graph $E$, i.e., $E_X^0=X$, $E_X^1=\{x\in E^1\mid s(x)\in X\hbox{ and } r(x)\in X\}$ and source (resp., range) function $s_X$ (resp., $r_X$) of the graph $E_X$ is the restriction of the source function $s$ (resp., range function $r$) of the graph $E$ on the set $E_X^1$. Let $A$ be a strongly connected component of a graph $E$. Put
$$I_A=\{u\in \op{Path}(E)\mid r(u)\in A\};$$
$$Q_A=\{u\in I_A\mid r(v)\notin A, \hbox{ for each non-trivial prefix }v \hbox{ of }u\}.$$

\begin{lemma}\label{C}
Let $A$ be a strongly connected component of a graph $E$ and $w$ be a path such that $s(w)\in A$ and $r(w)\in A$. Then $w\in \op{Path}(E_A)$ where $E_A$ is an induced subgraph of $E$.
\end{lemma}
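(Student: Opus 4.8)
\emph{Proposal.} The plan is to prove that every vertex visited by $w$ belongs to $A$. Granting that, each edge of $w$ has both its source and its range in $A=E_A^0$, hence lies in $E_A^1$; and since the source and range maps of $E_A$ are just the restrictions of those of $E$, the incidence conditions defining a path in $E_A$ are exactly those satisfied by $w$ in $E$. Therefore $w\in\op{Path}(E_A)$. First I would dispose of the case $|w|=0$: then $w=s(w)=r(w)\in A=E_A^0\subset\op{Path}(E_A)$, so there is nothing to prove.

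So assume $|w|>0$ and write $w=e_1\ldots e_n$. Put $v_0=s(w)$ and $v_i=r(e_i)$ for $1\le i\le n$, so that $v_n=r(w)$ and $v_{i-1}=s(e_i)$ for each $i$. By hypothesis $v_0,v_n\in A$. Fix $i$ with $0<i<n$. The prefix $e_1\ldots e_i$ is a path from $v_0$ to $v_i$, and the suffix $e_{i+1}\ldots e_n$ is a path from $v_i$ to $v_n$. Since $v_0$ and $v_n$ lie in the same strongly connected component $A$, there is a path $t$ with $s(t)=v_n$ and $r(t)=v_0$; concatenating the suffix with $t$ produces a path from $v_i$ to $v_0$. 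Thus there is a path from $v_0$ to $v_i$ and a path from $v_i$ to $v_0$, so $v_i$ and $v_0$ are strongly connected, and hence $v_i\in A$ because $A$ is the $R$-equivalence class of $v_0$. Consequently $v_0,v_1,\ldots,v_n\in A$, so every $e_i$ satisfies $s(e_i),r(e_i)\in A$, i.e. $e_i\in E_A^1$, and the sequence $e_1\ldots e_n$ is a path in $E_A$.

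I do not expect a genuine obstacle here; the argument is essentially a bookkeeping exercise. The only point that needs a little care is the use of the ``return path'' $t$ from $r(w)$ back to $s(w)$, which exists precisely because $A$ is strongly connected, and which is exactly what forces each intermediate vertex of $w$ to be strongly connected to $v_0$ and hence to lie in $A$. One should also be mindful that the length-zero path and the endpoint cases $i=0$, $i=n$ are handled separately (or absorbed by allowing length-zero prefixes and suffixes).
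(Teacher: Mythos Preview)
Your proposal is correct and is essentially the same argument as the paper's own proof: both handle the length-zero case separately, then use a return path from $r(w)$ to $s(w)$ (available because $A$ is strongly connected) to sandwich each intermediate vertex between paths to and from $s(w)$, forcing it into $A$, whence every edge of $w$ lies in $E_A^1$. The only differences are cosmetic choices of indexing and notation.
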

\begin{proof}
The proof is obvious if $w=e\in A$.
Let $w=a_1\ldots a_n$ be a path of non-zero length such that $s(w)=s(a_1)=e_1\in A$ and $r(w)=r(a_n)=f\in A$. Put $s(a_i)=e_i$, for each $i\leq n$.
Since vertices $e_1$ and $f$ belong to $A$ there exists a path $u$ such that $s(u)=f$ and $r(u)=e_1$. We claim that for each $i\leq n$ vertices $e_i$ belong to $A$. Indeed, put $x=a_1\ldots a_{i-1}$ and $y=a_i\ldots a_nu$. Then $s(x)=e_1, r(x)=e_i$ and $s(y)=e_i, r(y)=e_1$ which provides that $\{e_i\}_{i\leq n}\subseteq A$. Hence $a_i\in E_A^1$ for every $i\leq n$ and, as a consequence, $w\in \op{Path}(E_A)$.
\end{proof}

The following theorem describes the structure of a subsemigroup $J_A^0$ of $G(E)$ where $A$ is any strongly connected component of a graph $E$.
\begin{theorem}\label{JJ}
Let $E$ be any graph and $A\subseteq E^0$ be a strongly connected component. Then the semigroup $J_A^0$ is isomorphic to a subsemigroup of the Brandt $Q_A^0$-extension of the graph inverse semigroup $G(E_A)$ over the induced subgraph $E_A$.
\end{theorem}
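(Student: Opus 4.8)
The plan is to follow the proof of Theorem~\ref{D} as closely as possible, with two substitutions: the polycyclic monoid $\mathcal{P}_{|C_e^1\setminus\{e\}|}$ is replaced by $G(E_A)$, and the ``first return to $e$ / cycle at $e$'' factorization of paths of $I_e$ is replaced by a ``first arrival in $A$ / stays inside $A$'' factorization. The cornerstone, playing here the role that the ``obvious fact'' plays in Theorem~\ref{D}, is the claim that every path $u\in I_A$ has a unique factorization $u=u_1u_2$ with $u_1\in Q_A$ and $u_2\in\op{Path}(E_A)$, where either factor may be a vertex. For existence, take $u_1$ to be the shortest prefix of $u$ with $r(u_1)\in A$ and let $u_2$ be the complementary suffix; then $s(u_2)=r(u_1)\in A$ and $r(u_2)=r(u)\in A$, so $u_2\in\op{Path}(E_A)$ by Lemma~\ref{C}, while minimality of $u_1$ forces $u_1\in Q_A$. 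Uniqueness follows by a short length comparison using the defining property of $Q_A$. (Note that $A\subseteq Q_A$, so $Q_A\neq\emptyset$ and $\mathcal{B}_{Q_A}^0(G(E_A))$ is defined.)

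Granting this, I would define $h\colon J_A^0\to\mathcal{B}_{Q_A}^0(G(E_A))$ by $h(0)=0$ and $h(uv^{-1})=(u_1,u_2v_2^{-1},v_1)$ for a non-zero $uv^{-1}\in J_A$, where $u=u_1u_2$ and $v=v_1v_2$ are the factorizations above. Since $r(u)=r(v)\in A$ we get $r(u_2)=r(v_2)\in A$, so $u_2v_2^{-1}$ is a non-zero element of $G(E_A)$ and the triple is a well-defined non-zero element of $\mathcal{B}_{Q_A}^0(G(E_A))$. Uniqueness of the factorizations and of the normal form $u_2v_2^{-1}$ in $G(E_A)$ gives injectivity of $h$. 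The image is in general a proper subsemigroup of $\mathcal{B}_{Q_A}^0(G(E_A))$: a triple $(x,pq^{-1},y)$ is hit only when $r(x)=s(p)$ and $r(y)=s(q)$, which is exactly the extra compatibility that is automatic in Theorem~\ref{D} (where all of $C_e$ begins and ends at $e$) but not here; this is why the statement asserts only an embedding.

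It remains to check that $h$ is a homomorphism, by the same three-case analysis as in Theorem~\ref{D}: (1)~$c=bw$ for some path $w$; (2)~$b=cw$ for some path $w$; (3)~$ab^{-1}\cdot cd^{-1}=0$. In case~(1) one has $s(w)=r(b)=r(c)\in A$ and $r(w)=r(c)\in A$, so $w\in\op{Path}(E_A)$ by Lemma~\ref{C}; from this one checks that $c_1=b_1$, $c_2=b_2w$, and that $a_1(a_2w)$ is the canonical factorization of $aw$, so that computing $(a_1,a_2b_2^{-1},b_1)\cdot(b_1,b_2wd_2^{-1},d_1)$ in the Brandt extension yields $(a_1,a_2wd_2^{-1},d_1)=h(awd^{-1})$. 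Case~(2) is analogous. In case~(3), Lemma~\ref{C} again lets one translate ``$b$ is not a prefix of $c$ and $c$ is not a prefix of $b$'' into ``$b_1\neq c_1$, or else $b_1=c_1$ and neither $b_2$ is a prefix of $c_2$ nor $c_2$ of $b_2$ inside $\op{Path}(E_A)$'', which is precisely the condition making the product $(a_1,a_2b_2^{-1},b_1)\cdot(c_1,c_2d_2^{-1},d_1)$ equal to $0$ in $\mathcal{B}_{Q_A}^0(G(E_A))$. Once this is verified, $h$ is an injective homomorphism and $J_A^0\cong h(J_A^0)$ is the desired subsemigroup.

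The step I expect to require the most care is making the factorization $u=u_1u_2$ genuinely canonical — one must insist on the \emph{shortest} prefix lying in $Q_A$, since already a loop at a vertex of $A$ shows that several prefixes of a path may belong to $Q_A$ — and then verifying that this canonicity survives the products, in particular that $aw$ in case~(1) factors as $a_1(a_2w)$ with $a_2w\in\op{Path}(E_A)$. The rest is routine bookkeeping, with Lemma~\ref{C} doing the real work of transferring prefix relations between $E$ and $E_A$.
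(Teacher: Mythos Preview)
Your proposal is correct and follows essentially the same route as the paper: the same unique factorization $u=u_1u_2$ with $u_1\in Q_A$ and $u_2\in\op{Path}(E_A)$, the same map $uv^{-1}\mapsto(u_1,u_2v_2^{-1},v_1)$, and the same three-case homomorphism verification with Lemma~\ref{C} supplying $w\in\op{Path}(E_A)$. Two minor slips worth fixing: in case~(1) you write $s(w)=r(b)=r(c)$, but here $r(b)$ and $r(c)$ need not coincide (only both lie in $A$); and your concern that a loop exhibits several prefixes in $Q_A$ is unfounded under the paper's definition, since once a proper prefix has range in $A$ every longer prefix is excluded from $Q_A$, so exactly one prefix of any $u\in I_A$ lies in $Q_A$ and no ``shortest'' convention is needed.
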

\begin{proof}
The proof of this theorem is based on the following fact which follows from Lemma~\ref{C}. Each element $u\in I_A$ can be uniquely represented as follows: $u=u_1u_2$ where $u_1\in Q_A$ and $u_2\in \op{Path}(E_A)\subset \op{Path}(E)$. Observe that $u_1$ and $u_2$ could be equal to some vertex $e\in A$. Define the map $f:J_A^0\rightarrow B_{Q_A}^0(G(E_A))$ by the following way: $f(0)=0$ and for each non-zero element $uv^{-1}=u_1u_2(v_1v_2)^{-1}\in G(E)$ where $u_1,v_1\in Q_A$ and $u_2,v_2\in\op{Path}(E_A)\subset \op{Path}(E)$ put $f(uv^{-1})=(u_1,u_2v_2^{-1},v_1)$. The injectivity of the map $f$ is straightforward.
%Since for each non-zero element $(a,uv^{-1},b)\in B_{Q_A}^0(G(E_A))$ we have that $f(au(bv)^{-1})=(a,uv^{-1},b)$ the map $f$ is bijective.
Next we show that the map $f$ is a homomorphism. Fix any elements $ab^{-1},cd^{-1}\in J_A$. Following the main idea of the proof we can uniquely represent elements $a,b,c,d\in I_A$ as follows: $a=a_1a_2, b=b_1b_2, c=c_1c_2$ and $d=d_1d_2$ where $a_1,b_1,c_1,d_1\in Q_A$ and $a_2,b_2,c_2,d_2\in G(E_A)$. %Recall that $h(ab^{-1})=(a_1,f(a_2b_2^{-1}),b_1)$ and $h(cd^{-1})=(c_1,f(c_2d_2^{-1}),d_1)$.
There are three cases to consider:
\begin{itemize}
\item[$(1)$] there exists $w\in\op{Path}(E)$ such that $ab^{-1}\cdot cd^{-1}=awd^{-1}$, i.e., $c=bw$;
\item[$(2)$] there exists $w\in\op{Path}(E)$ such that $ab^{-1}\cdot cd^{-1}=a(dw)^{-1}$, i.e., $b=cw$;
\item[$(3)$] $ab^{-1}\cdot cd^{-1}=0$.
\end{itemize}
Consider case $1$. Observe that $s(w)=r(b)\in A$ and $r(w)=r(c)\in A$. By Lemma~\ref{C}, $w\in \op{Path}(E_A)$ which implies that $c_1=b_1$ and $c_2=b_2w$. Hence
\begin{equation*}
\begin{split}
&f(ab^{-1})\cdot f(cd^{-1})=(a_1,a_2b_2^{-1},b_1)\cdot (b_1,b_2wd_2^{-1},d_1)=(a_1,a_2b_2^{-1}\cdot b_2wd_2^{-1}, d_1)=\\
&=(a_1,a_2wd_2^{-1}, d_1)=f(awd^{-1}).
\end{split}
\end{equation*}
Consider case $2$. Observe that $s(w)=r(c)\in A$ and $r(w)=r(b)\in A$. By Lemma~\ref{C}, $w\in \op{Path}(E_A)$ which implies that $c_1=b_1$ and $b_2=c_2w$. Similar calculations as in case $1$ show that $f(ab^{-1})\cdot f(cd^{-1})=f(a(dw)^{-1})$.

Consider case $3$. Observe that neither $b=cw$ nor $c=bw$. Then one of the following two subcases holds:
\begin{itemize}
\item[$(3.1)$] $b_1\neq c_1$;
\item[$(3.2)$] $b_1=c_1$, but for any $w\in\op{Path}(E)$ neither $b_2=c_2w$ nor $c_2=b_2w$.
\end{itemize}
Consider subcase $3.1$. Then
$$
f(ab^{-1})\cdot f(cd^{-1})=(a_1,a_2b_2^{-1},b_1)\cdot (c_1,c_2d_2^{-1},d_1)=0=f(0).
$$
Consider subcase $3.2$. Then
$$f(ab^{-1})\cdot f(cd^{-1})=(a_1,a_2b_2^{-1},b_1)\cdot (b_1,c_2d_2^{-1},d_1)=(a_1,a_2b_2^{-1}\cdot c_2d_2^{-1}, d_1)=(a_1,0, d_1)=0=f(0).$$
Hence the map $f$ is an isomorphic embedding of the semigroup $J_A^0$ into $B_{Q_A}^0(G(E_A))$.
\end{proof}

\section{A global structure of graph inverse semigroups}
By $\mathcal{A}$ we denote the set of all strongly connected components of a graph $E$. The set $\mathcal{A}$ admits a natural partial order $\leq$: for each $X,Y\in \mathcal{A}$, $X\leq Y$ iff there exists a path $u\in\op{Path}(E)$ such that $s(u)\in Y$ and $r(u)\in X$.
 %Simple verifications show that $(\mathcal{A}^0,\leq)$ is a semilattice and $\min\{X,Y\}\in\{X,Y,0\}$.
%The following theorem shows that each graph inverse semigroup $G(E)$ is a $0$-union of the semilattice $\mathcal{A}^0$ of Brandt $Q_X^0$-extensions of semigroups $G(E_X)$ ($X\in \mathcal{A}^0$). Here we agree that $G(E_0)=J_0=\{0\}$.
%The following theorem shows that each graph inverse semigroup $G(E)$ is a $0$-union of the semilattice $\mathcal{A}^0$ of semigroups $J_X^0$ ($X\in \mathcal{A}^0$).
%Further we agree that $J_0^0=J_0=\{0\}$.
\begin{theorem}\label{charJ}
For any graph $E$ the following statements hold:
\begin{itemize}
\item[$(1)$] $G(E)=\cup_{X\in \mathcal{A}}J_X^0$;
\item[$(2)$] $J_X^0$ is isomorphic to a subsemigroup of $B_{Q_X}^0(G(E_X))$, for each $X\in\mathcal{A}$.
\item[$(3)$] $J_X^0\cap J_Y^0=\{0\}$ for each distinct elements $X,Y\in \mathcal{A}$;
\item[$(4)$] if $X\leq Y$ then $J_X^0\cdot J_Y^0\cup J_Y^0\cdot J_X^0\subseteq J_X^0$;
\item[$(5)$] if $X\nleq Y$ and $Y\nleq X$ then $J_X^0\cdot J_Y^0\cup J_Y^0\cdot J_X^0\subseteq \{0\}$.
\end{itemize}
\end{theorem}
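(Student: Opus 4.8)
The plan is to dispatch $(1)$, $(2)$, $(3)$ directly and then build $(4)$ and $(5)$ on a single case analysis of the multiplication rule. For $(1)$: every non-zero element of $G(E)$ has the form $uv^{-1}$ with $r(u)=r(v)=:f\in E^{0}$, so it lies in $D_{f}$, and $D_{f}\subseteq J_{X}$ where $X\in\mathcal{A}$ is the strongly connected component containing $f$; since $0\in J_{X}^{0}$ for every $X\in\mathcal{A}$, we obtain $G(E)=\bigcup_{X\in\mathcal{A}}J_{X}^{0}$. Statement $(2)$ is exactly Theorem~\ref{JJ}. For $(3)$: distinct strongly connected components correspond to distinct, hence disjoint, non-zero $\mathcal{J}$-classes, so $J_{X}\cap J_{Y}=\emptyset$ and therefore $J_{X}^{0}\cap J_{Y}^{0}=\{0\}$ whenever $X\neq Y$.

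For $(4)$ I would first halve the work. By Corollary~\ref{J} each $J_{Z}^{0}$ is an inverse subsemigroup of $G(E)$, hence closed under inversion; so for $p\in J_{X}^{0}$ and $q\in J_{Y}^{0}$ we have $pq=(q^{-1}p^{-1})^{-1}$ with $q^{-1}\in J_{Y}^{0}$, $p^{-1}\in J_{X}^{0}$, and it suffices to prove the single inclusion $J_{Y}^{0}\cdot J_{X}^{0}\subseteq J_{X}^{0}$. If $X=Y$ this is immediate from Corollary~\ref{J}, and the ``forbidden'' case discussed below genuinely occurs here, which is why this split is needed. So suppose $X\neq Y$; since $\leq$ is a partial order and $X\leq Y$, antisymmetry forces $Y\nleq X$. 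Take non-zero $ab^{-1}\in J_{Y}$ and $cd^{-1}\in J_{X}$ with $ab^{-1}\cdot cd^{-1}\neq 0$; by the multiplication rule in $G(E)$ there is a path $w$ with either $c=bw$ and $ab^{-1}\cdot cd^{-1}=awd^{-1}$, or $b=cw$ and $ab^{-1}\cdot cd^{-1}=a(dw)^{-1}$.

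The key step, which I also expect to be the only delicate point, is ruling out the second alternative. In it $w$ satisfies $s(w)=r(c)\in X$ and $r(w)=r(b)\in Y$; if $|w|>0$ this path witnesses $Y\leq X$, contradicting $Y\nleq X$, and if $|w|=0$ then $b=c$, so $r(b)=r(c)$ lies in $X\cap Y$ and $X=Y$, again a contradiction. Hence only the first alternative occurs, and $awd^{-1}\in D_{r(d)}\subseteq J_{X}$ since $r(d)=r(c)\in X$. This gives $J_{Y}^{0}\cdot J_{X}^{0}\subseteq J_{X}^{0}$, and the reduction above yields $J_{X}^{0}\cdot J_{Y}^{0}\subseteq J_{X}^{0}$ as well, proving $(4)$. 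For $(5)$ the same computation applies with \emph{both} alternatives excluded: if $X\nleq Y$ and $Y\nleq X$ (so $X\neq Y$), then for non-zero $cd^{-1}\in J_{X}$ and $ab^{-1}\in J_{Y}$ each of the two possible non-zero values of $cd^{-1}\cdot ab^{-1}$ produces a path running from $X$ to $Y$ or from $Y$ to $X$ (or identifies the two components when the connecting path has length $0$), all of which are impossible; therefore $cd^{-1}\cdot ab^{-1}=0$, and symmetrically $ab^{-1}\cdot cd^{-1}=0$, so $J_{X}^{0}\cdot J_{Y}^{0}\cup J_{Y}^{0}\cdot J_{X}^{0}\subseteq\{0\}$. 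Beyond keeping straight which endpoint of a path lies in which component and remembering to treat $X=Y$ separately in $(4)$, I do not anticipate any real obstacle.
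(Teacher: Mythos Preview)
Your proposal is correct and follows essentially the same approach as the paper: statements (1)--(3) are handled identically, and (4)--(5) rest on the same case split of the multiplication rule, ruling out whichever branch would produce a path witnessing the forbidden inequality. The only cosmetic differences are that you invoke $(pq)^{-1}=q^{-1}p^{-1}$ to halve the work in (4) (the paper writes out only one of the two products and leaves the other implicit), treat $X=Y$ separately rather than folding it into the antisymmetry step as the paper does, and split off the $|w|=0$ subcase unnecessarily (length-$0$ paths already witness $\leq$ by definition).
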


\begin{proof}
Statements $1$ and $3$ follows from the fact that $\mathcal{J}$ is an equivalence relation. Statement $2$ follows from Theorem~\ref{JJ}.

Consider statement $4$. Assume that $X,Y\in \mathcal{A}$ and $X\leq Y$. Fix any elements $ab^{-1}\in J_X^0$ and $cd^{-1}\in J_Y^0$. Observe that the case $ab^{-1}\cdot cd^{-1}=0$ is trivial, because $0\in J_X^0$. Suppose that $ab^{-1}\cdot cd^{-1}\neq 0$. In this case there exists a path $w$ such that either $c=bw$ or $b=cw$. If $b=cw$ then $ab^{-1}\cdot cd^{-1}=a(dw)^{-1}$ and $r(dw)=r(w)=r(b)=r(a)\in X$. Hence $a(dw)^{-1}\in J_X$.
If $c=bw$ then $s(w)=r(b)\in X$ and $r(w)=r(c)\in Y$ which implies that $Y\leq X$. Since the order $\leq$ is antisymmetric we obtain that $X=Y$. Hence Corollary~\ref{J} provides that $ab^{-1}\cdot cd^{-1}\in J_X$.

Consider statement $5$. Assume that $X\nleq Y$ and $Y\nleq X$.
Fix any elements $ab^{-1}\in J_X^0$ and $cd^{-1}\in J_Y^0$. We claim that neither $b$ is a prefix of $c$ nor $c$ is a prefix of $b$. Indeed, if $b$ is a prefix of $c$, i.e., $c=bw$ for some path $w$. Then $s(w)=r(b)\in X$ and $r(w)=r(c)\in Y$ witnessing that $Y \leq X$ which contradicts to the assumption. If $c$ is a prefix of $b$, i.e., $b=cw$ for some path $w$. Then $s(w)=r(c)\in Y$ and $r(w)=r(b)\in X$ witnessing that $X \leq Y$ which contradicts to the assumption.
Hence $ab^{-1}\cdot cd^{-1}=0$.
\end{proof}

The proof of the following lemma follows from the definition of Green's relations $\mathcal{D}$ and $\mathcal{J}$.

\begin{lemma}\label{JD}
For a graph inverse semigroup $G(E)$ the following conditions are equivalent:
\begin{itemize}
\item[$(1)$] relations $\mathcal{J}$ and $\mathcal{D}$ coincide on $G(E)$;
\item[$(2)$] graph $E$ is acyclic.
\end{itemize}
\end{lemma}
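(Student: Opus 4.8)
The plan is to prove the two implications $(2)\Rightarrow(1)$ and $(1)\Rightarrow(2)$ separately, using the descriptions of $\mathcal D$-classes and $\mathcal J$-classes recalled from \cite{Mesyan-2016}. Recall that by \cite[Corollary~2]{Mesyan-2016}, two non-zero elements $ab^{-1}$ and $cd^{-1}$ of $G(E)$ are $\mathcal D$-equivalent iff $r(a)=r(b)=r(c)=r(d)$, and they are $\mathcal J$-equivalent iff there exist $u,v\in\op{Path}(E)$ with $s(u)=r(a)=r(v)$ and $r(u)=r(c)=s(v)$; equivalently, $\mathcal J$-equivalence means the vertices $r(a)$ and $r(c)$ lie in the same strongly connected component. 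Since $\mathcal D\subseteq\mathcal J$ always holds (in any semigroup), the content of the lemma is the reverse inclusion $\mathcal J\subseteq\mathcal D$, restricted to the non-zero elements (the zero forms its own class in both relations).

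For $(2)\Rightarrow(1)$: assume $E$ is acyclic. I would show that every strongly connected component is a singleton. Indeed, if $a$ and $b$ are strongly connected and distinct, then there are paths $u,v$ with $a=s(u)=r(v)$ and $b=s(v)=r(u)$; since $a\neq b$ at least one of $u,v$ has non-zero length, and then $uv$ (or $vu$) is a path of non-zero length from $a$ to $a$ (or from $b$ to $b$), i.e.\ a cycle, contradicting acyclicity. Hence each component $J_X\cap E^0$ is a single vertex. Now if $ab^{-1}\mathcal J\,cd^{-1}$, the vertices $r(a)$ and $r(c)$ lie in the same component, which is a singleton, so $r(a)=r(c)$; combined with $r(a)=r(b)$ and $r(c)=r(d)$ this gives $r(a)=r(b)=r(c)=r(d)$, whence $ab^{-1}\mathcal D\,cd^{-1}$. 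Thus $\mathcal J\subseteq\mathcal D$ and the two relations coincide.

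For $(1)\Rightarrow(2)$: I would argue by contraposition. Suppose $E$ is not acyclic, so there is a cycle $x$ with $s(x)=r(x)=e$ and $|x|>0$. Then the vertex $e$ and the edge $x_1$ (the first edge of $x$, with $s(x_1)=e$, $r(x_1)=f$ say) illustrate the failure of coincidence: more directly, consider the two non-zero elements $e=ee^{-1}$ and $xe^{-1}$ of $G(E)$ (here $x\in\op{Path}(E)$ with $r(x)=e$, so $xe^{-1}$ is a legitimate element). We have $r(e)=r(e)=r(x)=r(e)=e$ wait—both have all four endpoints equal to $e$, so they are $\mathcal D$-equivalent, which is the wrong direction. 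Instead I should pick elements that are $\mathcal J$- but not $\mathcal D$-equivalent. Take $e$ and $s(x_1\cdots x_{n-1})$-type vertices along the cycle: if the cycle has length $\geq 2$, the vertices $e=s(x)$ and $f=r(x_1)\neq e$ are strongly connected (go along $x_1$ from $e$ to $f$, and along $x_2\cdots x_n$ from $f$ back to $e$), hence $e\,\mathcal J\,f$, but $r(e)=e\neq f=r(f)$ so $e$ and $f$ are not $\mathcal D$-equivalent. If the only cycles have length $1$, i.e.\ $x$ is a loop at $e$, then $e=ee^{-1}$ and $xx^{-1}$: here $r(e)=e=r(x)$ so they are $\mathcal D$-equivalent—again wrong. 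In the loop case, instead compare $e$ and $xe^{-1}$: these have $r$-values $e,e$... also $\mathcal D$-equivalent. The correct witnesses in the loop case: note $\mathcal J$ relates $e$ to itself only among vertices, so a loop at $e$ does not by itself break $\mathcal D=\mathcal J$; the genuine obstruction is a cycle of length $\geq 2$, which produces two distinct strongly connected vertices. So I would first reduce: if $E$ is not acyclic it has a cycle; a cycle of length $1$ is a loop, and I must handle whether a graph whose every cycle is a loop can still have $\mathcal D\neq\mathcal J$—it can, because a loop at $e$ together with an edge $e\to f$ and back... no. The clean statement: $\mathcal D=\mathcal J$ iff every strongly connected component is trivial iff $E$ is acyclic, where "acyclic" per the paper's definition means no cycles at all, and a graph with a loop is not acyclic yet has all components trivial. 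This suggests the lemma as literally stated may need the loop case examined; I expect the author's intended reading is that the presence of any cycle (loop or longer) yields distinct $\mathcal D$- and $\mathcal J$-classes via an element like $x e^{-1}$ versus $e$ compared at the level of ideals, not merely endpoints—so the \emph{main obstacle} is pinning down the loop case, where one should exhibit that $\langle C_e\rangle\cong\mathcal P_1$ (the bicyclic-plus-zero) already forces $J_e\neq D_e$ because the $\mathcal J$-class absorbs powers $x^n e^{-1}$ that escape any single $\mathcal D$-class—wait, they do not, all have $r=e$. I would resolve this by checking directly from the ideal definitions of $\mathcal D$ and $\mathcal J$ (as the lemma statement promises its proof does) that a loop at $e$ makes $x$ and $e$ lie in the same $\mathcal J$-class but, since $\mathcal D$-classes are indexed by single vertices and $x,e$ share the vertex $e$, they do lie in the same $\mathcal D$-class too—forcing me to conclude that a graph with only loops has $\mathcal D=\mathcal J$, contradicting the lemma unless "acyclic" is read as "no cycles of length $\geq 2$." Given the hint that the proof "follows from the definition of Green's relations $\mathcal D$ and $\mathcal J$," I expect the honest write-up to observe that every non-trivial strongly connected component contains at least two vertices (a loop alone gives a trivial component), and therefore the equivalence reads: $\mathcal D=\mathcal J$ $\iff$ all components trivial $\iff$ no cycle of length $\geq 2$; and the paper's convention must make "acyclic" compatible with that—so I will present $(2)\Rightarrow(1)$ exactly as above, and $(1)\Rightarrow(2)$ by taking two distinct vertices in a non-trivial component as $\mathcal J$-related non-$\mathcal D$-related witnesses, flagging the loop convention as the only delicate point.
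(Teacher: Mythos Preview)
The paper does not actually give a proof of this lemma; it only asserts that it ``follows from the definition of Green's relations $\mathcal{D}$ and $\mathcal{J}$.'' So there is no detailed argument to compare against.

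Your proof of $(2)\Rightarrow(1)$ is correct and is exactly what one expects: if $E$ is acyclic then no two distinct vertices can be strongly connected, so every strongly connected component is a singleton, and the Mesyan--Mitchell descriptions of the two relations then force $\mathcal J=\mathcal D$. This is surely what the paper has in mind, and it is the only direction actually used later (in the proof of Theorem~\ref{char}).

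For $(1)\Rightarrow(2)$ you struggle, and you are right to: under the paper's own definition of ``acyclic'' (no path $x$ with $s(x)=r(x)$ and $|x|>0$) this implication is \emph{false}. The graph with a single vertex $e$ and one loop is a counterexample: here $G(E)\cong\mathcal P_1$, every non-zero element $uv^{-1}$ has $r(u)=r(v)=e$, so all non-zero elements lie in one $\mathcal D$-class and (trivially) in one $\mathcal J$-class; hence $\mathcal D=\mathcal J$, yet $E$ is not acyclic. More generally, the cited description of $\mathcal J$ shows that $\mathcal D=\mathcal J$ on $G(E)$ holds precisely when every strongly connected component of $E$ is a singleton, a condition strictly weaker than acyclicity (loops are allowed). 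Your instinct to isolate the loop case as the delicate point is therefore exactly right: the defect lies in the statement of the lemma, not in your argument, and your meandering attempt to produce witnesses in the loop case could not have succeeded.
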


Now we apply our results to graph inverse semigroups over acyclic graphs. Observe that each strongly connected component of an acyclic graph $E$ coincides with some vertex $e\in E^0$.
Hence each acyclic graph $E$ admits a natural partial order $\leq$ on the set $E^0$. For each $e,f\in E^0$, $e\leq f$ iff there exists a path $u$ such that $s(u)=f$ and $r(u)=e$. The following theorem describes the structure of graph inverse semigroups over acyclic graphs.

\begin{theorem}\label{char}
Let $E$ be an acyclic graph. Then the following statements hold:
\begin{itemize}
\item[$(1)$] $G(E)=\cup_{e\in E^0}D_e^0$;
\item[$(2)$] $D_e^0$ is isomorphic to the semigroup of $I_e{\times}I_e$-matrix units $\mathcal{B}^0_{I_e}$, for each vertex $e\in E^0$;
\item[$(3)$] $D_e^0\cap D_f^0=\{0\}$, for each distinct vertices $e,f\in E^0$;
\item[$(4)$] If $e\leq f$ then $D_e^0\cdot D_f^0\cup D_f^0\cdot D_e^0\subseteq D_e^0$;
\item[$(5)$] If $e\nleq f$ and $f\nleq e$ then $D_e^0\cdot D_f^0\cup D_f^0\cdot D_e^0= \{0\}$.
\end{itemize}
\end{theorem}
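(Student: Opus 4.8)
The plan is to deduce every clause from the corresponding clause of Theorem~\ref{charJ}, once we record how the hypotheses degenerate in the acyclic case. The first step is the structural remark already noted before the statement: since $E$ is acyclic, no two distinct vertices are strongly connected (a pair of paths witnessing strong connectedness would concatenate to a cycle), so every strongly connected component is a singleton $\{e\}$ with $e\in E^0$; moreover, by Lemma~\ref{JD}, $\mathcal{J}$ and $\mathcal{D}$ coincide on $G(E)$, whence $J_{\{e\}}=D_e$ and $J_{\{e\}}^0=D_e^0$ for every vertex $e$. The natural order on $\mathcal{A}$ from Section~3 then transports verbatim to the order $\leq$ on $E^0$ described before the statement, since both say that there is a path from the larger vertex to the smaller one. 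With this dictionary, statement~$(1)$ is exactly Theorem~\ref{charJ}$(1)$, statement~$(3)$ is Theorem~\ref{charJ}$(3)$, and statement~$(4)$ is Theorem~\ref{charJ}$(4)$ applied to the singleton components $X=\{e\}$ and $Y=\{f\}$.

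For statement~$(2)$ I would invoke Corollary~\ref{matrix}: an acyclic graph is acyclic at every vertex, because a non-trivial path $u$ with $s(u)=r(u)=e$ would be a cycle; hence $C_e=\{e\}$ and $I_e=Q_e$, and Corollary~\ref{matrix} gives $D_e^0\cong\mathcal{B}_{I_e}^0$. (Equivalently, one runs Theorem~\ref{D} with $|C_e^1\setminus\{e\}|=0$ and recalls that $\mathcal{P}_0$ is the semilattice $(\{0,1\},\min)$.)

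For statement~$(5)$, Theorem~\ref{charJ}$(5)$ yields the inclusion $D_e^0\cdot D_f^0\cup D_f^0\cdot D_e^0\subseteq\{0\}$; the reverse inclusion is immediate, since $0\in D_e^0\cap D_f^0$ and $0=0\cdot 0$ belongs to both products, so the union equals $\{0\}$.

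I do not anticipate a genuine obstacle: no new computation is needed, and the only point that has to be checked with any care is the ``dictionary'' of the first paragraph---that acyclicity forces singleton strongly connected components, that $Q_e=I_e$ and $C_e=\{e\}$, and that the order $\leq$ on $E^0$ in the statement coincides with the order $\leq$ on $\mathcal{A}$---after which each clause is a one-line specialization of an already proved result, and in particular the antisymmetry of $\leq$ used in Theorem~\ref{charJ}$(4)$ remains available here.
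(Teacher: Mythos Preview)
Your proposal is correct and follows essentially the same route as the paper: the paper also says statements~(1) and~(3) are obvious, derives~(2) from Corollary~\ref{matrix}, and deduces~(4) and~(5) from Lemma~\ref{JD} together with the corresponding clauses of Theorem~\ref{charJ}. Your argument is in fact slightly more careful than the paper's, since you explicitly justify the equality (rather than mere inclusion) in~(5), which the paper leaves implicit.
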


\begin{proof}
Statements $1$ and $3$ are obvious. Statement $2$ follows from Corollary~\ref{matrix}.
Statement $4$ (resp., $5$) follows from Lemma~\ref{JD} and statement $4$ (resp., $5$) of Theorem~\ref{charJ}.
\end{proof}

\section*{Acknowledgements}
The author acknowledges the referee for his comments and suggestions.

\end{document}